\newtheorem{theorem}{Theorem}
\newtheorem{corollary}[theorem]{Corollary}
\newenvironment{proof}[1][Proof]{\noindent\textbf{#1.} }{\ \rule{0.5em}{0.5em}}
\begin{document}

\title{Bohr' inequality for large functions}
\author{Loai Shakaa and Yusuf Abu Muhanna \\
American University of Sharjah,\\
ymuhanna@aus.edu}
\maketitle

\begin{abstract}
We prove that the Bohr' radius for large functions is $e^{-\pi }.$
\end{abstract}

\section{\protect\underline{Introduction: }}

In this article, we shall investigate Bohr's phenomenon for spaces of large
functions. By large,

we mean analytic function that maps the disk into a hyperbolic domain ( it
misses at least two finite points). This includes almost all known classes
of analytic functions. Here we give a lower bound for Bohr's radius.

We say a class of analytic functions $\digamma $ satisfies the Bohr's
phenomenon,

if a function $f=\dsum\limits_{0}a_{n}z^{n}$ is in $\digamma $, the Bohr'
operator 
\begin{equation*}
M(f)=\dsum\limits_{0}\left\vert a_{n}z^{n}\right\vert
\end{equation*}

is uniformly bounded on some uniform disk $|z|\leq \rho ,$ with $\rho >0.$
[1], [3].

The largest such radius $\rho $ is called the Bohr's radius for the class $%
\digamma .$

Clearly: 1) $M(f+g)\leq M(f)+M(g)$

\ \ \ \ \ \ \ \ \ \ \ \ 2) $M(fg)\leq M(f)M(g).\qquad $

\qquad \qquad 3) $M(1)=1.$

This makes $\digamma $ into a Banach algebra, with norm $M(f)$. Let us
recall two

properties for Banach algebra:

1) Bohr's theorem: If $|f(z)|<1,$ for all $z\in U,$ then $M(f)<1$, when $%
|z|<1/3.$

2) Von Neumann inequality: $|p(f(z))|<||p||_{\infty ,}$ where $p(z)$ is a
polynomial.

Von Neumann showed the above inequality is true for the space of boubded
operators

on a Hilbert space, $L(H)$. [3], [5].

Dixon [5], showed that $the$ space 
\begin{equation*}
l_{\beta }^{1}=\left\{ x=(x_{1},x_{2},x_{3},\ldots ):\frac{1}{\beta }%
\sum_{j=1}^{\infty }|x_{j}|<\infty \right\} ,
\end{equation*}%
Dixon in [5], showed that the Von Neumann inequality is true, for $0<\beta
<1/3$ and not satisfied for any $\beta \geq 1/3.$

\bigskip

By a large function, we mean an analytic function on the unit disk whose
range misses at most two finite points from the complex plane. The following
theorem is center to all of this:

\begin{theorem}
\bigskip (Uniformization Theorem [4]), If $D$ is an open set missing at
least 2 points then there is a universal cover(conformal) from $U$ into $D.$
This cover is unique with the normalization $F(0)=a$ and $F^{\prime }(0)>0,$
for some $a\in D.$
\end{theorem}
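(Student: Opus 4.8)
The plan is to build the covering map in three stages: construct the universal cover abstractly, identify it conformally with $U$, and then normalize so that uniqueness follows from the Schwarz lemma. First I would form the universal covering surface $\pi:\widetilde{D}\to D$ as a Riemann surface. Since $D$ is a planar domain, the complex structure pulls back through $\pi$, so $\widetilde{D}$ is a simply connected Riemann surface, and the deck transformation group $\Gamma$ acts on $\widetilde{D}$ by conformal automorphisms, freely and properly discontinuously, with $\widetilde{D}/\Gamma\cong D$.

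The central step is to invoke the classification of simply connected Riemann surfaces: every such surface is conformally equivalent to the Riemann sphere $\widehat{\mathbb{C}}$, the plane $\mathbb{C}$, or the disk $U$. I would then eliminate the first two cases using the hypothesis that $D$ omits at least two finite points. The sphere is compact and cannot cover the noncompact $D$. If $\widetilde{D}\cong\mathbb{C}$, then $\Gamma$ is a fixed-point-free group of automorphisms $z\mapsto az+b$ of $\mathbb{C}$, hence consists only of translations; the resulting quotients that are planar domains are $\mathbb{C}$ and $\mathbb{C}^{*}$ (a rank-two lattice gives a compact torus), and these omit at most one finite point, contradicting the hypothesis. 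Hence $\widetilde{D}\cong U$, and composing this equivalence with $\pi$ yields a holomorphic covering $F:U\to D$.

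Finally I would normalize. Fixing $a\in D$, I choose a preimage and precompose $F$ with a disk automorphism so that $F(0)=a$; a rotation $z\mapsto e^{i\theta}z$ then arranges $F'(0)>0$. For uniqueness, if $F_{1},F_{2}:U\to D$ are two such covers with $F_{i}(0)=a$ and $F_{i}'(0)>0$, then by the universal property there is a conformal automorphism $\phi$ of $U$ with $F_{2}=F_{1}\circ\phi$ and $\phi(0)=0$; such $\phi$ is a rotation, and matching the positive derivatives forces $\phi=\mathrm{id}$, so $F_{1}=F_{2}$.

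The main obstacle is the classification step, which carries the genuine analytic content: proving that a simply connected hyperbolic Riemann surface is biholomorphic to $U$ requires solving a Dirichlet problem (constructing a Green's function by the Perron method), not merely soft topology. For the planar case at hand one can shorten the elimination of the nonhyperbolic types using the classical elliptic modular covering $\lambda:U\to\mathbb{C}\setminus\{0,1\}$: after an affine change placing the two omitted points at $0$ and $1$, so that $D\subseteq\mathbb{C}\setminus\{0,1\}$, the simply connected surface $\widetilde{D}$ admits a nonconstant holomorphic lift into $U$, which rules out $\widetilde{D}\cong\mathbb{C}$ by Liouville and $\widetilde{D}\cong\widehat{\mathbb{C}}$ by compactness, again forcing $\widetilde{D}\cong U$. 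I would cite [4] for the deep classification and supply the case elimination and the normalization above.
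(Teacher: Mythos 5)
The paper does not actually prove this statement: it is quoted verbatim as a classical result with a citation to [4], so there is no internal argument to compare yours against. Your outline is the standard proof and it is correct. You isolate the genuinely deep ingredient (the classification of simply connected Riemann surfaces, equivalently the existence of a Green's function via Perron) and correctly observe that for the planar case the elimination of $\widehat{\mathbb{C}}$ and $\mathbb{C}$ can be done either by classifying fixed-point-free automorphism groups or, more economically, by lifting through the elliptic modular covering $\lambda:U\to\mathbb{C}\setminus\{0,1\}$ and applying Liouville --- the latter shortcut is particularly apt here, since the modular function $J$ is the central tool of the rest of the paper, so your argument ties the cited theorem to the machinery the authors actually use. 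Your normalization and uniqueness step (lift one cover through the other, get a rotation fixing $0$, match positive derivatives via Schwarz) is the standard one and is complete; note that $F'(0)\neq 0$ is automatic because a covering map is a local homeomorphism, so the rotation can always be chosen. Two small points worth recording: the theorem as printed should require $D$ to be \emph{connected} (a domain), since otherwise the universal cover is not a single map from $U$, and the cover is of course \emph{onto} $D$ rather than merely ``into'' it; your argument tacitly and correctly supplies both corrections.
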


\begin{corollary}
If $f(z)$ is analytic and maps $U$ into $D$ then there is a Schwartz
function $\varphi (z)$ so that
\end{corollary}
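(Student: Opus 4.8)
The plan is to recognize this as a lifting statement: the intended conclusion is that $f = F\circ \varphi$, where $F$ is the universal covering map of $D$ supplied by the Uniformization Theorem and $\varphi:U\to U$ is an analytic self-map of the disk (a Schwarz function). The whole result is an application of the lifting theorem from covering space theory, exploiting that the domain $U$ is simply connected. First I would fix base points: set $p=f(0)$, choose any $w_{0}\in F^{-1}(p)\subset U$, and aim to produce $\varphi$ with $\varphi(0)=w_{0}$ and $F\circ\varphi=f$.

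The first main step is to invoke the lifting criterion. For a covering $F:U\to D$ and a continuous map $f:U\to D$ out of a path-connected, locally path-connected space, a continuous lift through $F$ exists iff $f_{*}(\pi_{1}(U))\subseteq F_{*}(\pi_{1}(U))$ at the chosen base points. Since $U$ is simply connected, $\pi_{1}(U)$ is trivial and this inclusion holds automatically, so a continuous lift $\varphi:U\to U$ with $F\circ\varphi=f$ and $\varphi(0)=w_{0}$ exists and is unique.

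The second step is to upgrade continuity to analyticity. Because $F$ is holomorphic and a covering map, it is a local biholomorphism: $F'$ never vanishes (consistent with the normalization $F'(0)>0$ from the Uniformization Theorem), so about each point of $U$ there is a neighborhood on which $F$ has a holomorphic local inverse. On such a neighborhood $\varphi=F^{-1}\circ f$ is a composition of holomorphic maps; since holomorphicity is a local property, $\varphi$ is holomorphic on all of $U$. As $\varphi$ maps $U$ into $U$, it is by definition a Schwarz function, which is exactly the desired conclusion.

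The hard part here is essentially conceptual rather than computational: there is no genuine obstruction, precisely because the simple connectivity of $U$ makes the fundamental-group condition in the lifting criterion vacuous. The only point requiring care is the passage from a merely continuous lift to a holomorphic one, and this is dispatched immediately by the fact that $F$ is a local biholomorphism. I would therefore expect the proof to be short, with the bulk of the justification being the correct citation of the lifting theorem and the local-inverse argument for analyticity.
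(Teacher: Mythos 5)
Your proof is correct: the paper states this corollary without any proof at all, and the argument you give — the lifting criterion for the covering $F:U\to D$, which is vacuously satisfied because $\pi_{1}(U)$ is trivial, followed by the observation that the continuous lift is automatically holomorphic since $F$ is a local biholomorphism — is the standard justification the authors are implicitly relying on. One small refinement worth making explicit: to get the normalization $\varphi(0)=0$ (which the paper uses later, writing $\varphi(0)=0$ and $\psi(0)=0$ in the proof of the main theorem), you should choose the covering map normalized so that $F(0)=f(0)$, as the Uniformization Theorem permits, and then take the lift with base point $w_{0}=0$ rather than an arbitrary point of the fiber $F^{-1}(f(0))$.
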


$f(z)=F(\varphi (z)).$

If we denote the universal covering of $D$ by $F,$ then $F$ defines a
hyperbolic metric on $D$ defined by%
\begin{equation}
\lambda (F(z))=\frac{1}{F^{\prime }(z)}\frac{1}{1-|z|^{2}}.
\end{equation}

\bigskip

In the following theorem of David Minda, [9], conformal means non-vanishing
derivative.

\begin{theorem}
Let $D$ be a hyperbolic domain, with hyperbolic metric $\lambda (w),$
\end{theorem}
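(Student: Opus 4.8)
The plan is to transfer every statement about the intrinsic density $\lambda$ on $D$ into an equivalent statement about the universal covering map $F\colon U\to D$ supplied by Theorem 1, where the Schwarz--Pick calculus is directly available. I would begin from the defining relation $\lambda(F(z))=\frac{1}{F'(z)}\,\frac{1}{1-|z|^{2}}$, which says precisely that $F$ is a local isometry from the Poincar\'e disk $\bigl(U,\frac{|dz|}{1-|z|^{2}}\bigr)$ onto $\bigl(D,\lambda\,|dw|\bigr)$. Fixing the base point $a=F(0)$, evaluation at $z=0$ gives $\lambda(a)=1/F'(0)$, so that any pointwise bound on $\lambda$ becomes an equivalent distortion bound on $F'$, and conversely. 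This is the form in which the hypothesis that a competitor $f$ be \emph{conformal} (locally univalent, i.e.\ $f'$ non-vanishing) is most naturally exploited.

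Next I would bring in the two structural tools that do the real work. The first is monotonicity of the hyperbolic metric under inclusion: if $D\subset D'$ are hyperbolic domains, then $\lambda_{D'}\le\lambda_{D}$ on $D$, which one proves by lifting the inclusion to the respective covers and applying the Schwarz lemma. The second is Schwarz--Pick applied to a competitor directly: given analytic $f\colon U\to D$ with $f(0)=a$, lift it through $F$ to $\widetilde f\colon U\to U$ with $F\circ\widetilde f=f$ and $\widetilde f(0)=0$, so that $f'(0)=F'(0)\,\widetilde f'(0)$ and $|\widetilde f'(0)|\le 1$. This immediately yields the asserted extremal inequality for $\lambda(a)$, with equality forced exactly when $\widetilde f$ is a rotation, i.e.\ when $f$ is the covering map itself up to an automorphism of $U$ compatible with the normalization $F(0)=a,\ F'(0)>0$. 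Combining this with monotonicity lets me compare an arbitrary $D$ against an explicit extremal model; for the constants that eventually produce the Bohr radius $e^{-\pi}$ the relevant model is the twice-punctured plane $\mathbb{C}\setminus\{0,1\}$ (equivalently the punctured disk), whose density is known in closed form through the elliptic modular function.

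The final step is to verify sharpness and to locate the extremal configuration, and this is where I expect the main obstacle to lie. The soft ingredients above give the inequality cheaply, but pinning the constant requires examining the behaviour of $F$ near an isolated boundary point, where $\lambda(w)\sim\frac{1}{|w-w_{0}|\,\log(1/|w-w_{0}|)}$, and tracking how the normalization interacts with this asymptotic. Showing that no genuinely smaller hyperbolic domain can improve upon the punctured model, and that the extremal map is unique up to the rotations allowed by the normalization, does not follow from monotonicity and Schwarz--Pick alone; it forces the explicit modular-function computation rather than a purely metric argument, and that computation is the delicate part of the proof.
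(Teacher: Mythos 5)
The statement you are addressing (together with its continuation, which the source displaces outside the theorem environment) is the \emph{identity} $\lambda(f(z))=\frac{1}{|f'(z)|}\cdot\frac{1}{1-|z|^{2}}$ for a conformal surjection $f\colon U\to D$, i.e.\ that $f$ carries the Poincar\'e density of $U$ to the hyperbolic density of $D$. Your argument only delivers an \emph{inequality}: lifting $f$ through the universal cover $F$ to $\widetilde f\colon U\to U$ and applying Schwarz--Pick gives $|\widetilde f'(0)|\le 1$, hence $\lambda(f(0))\,|f'(0)|\,(1-|0|^{2})\le 1$, and you yourself observe that equality holds exactly when $\widetilde f$ is a rotation, i.e.\ when $f$ is essentially the covering map. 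Nothing in your proposal shows that the theorem's hypotheses (non-vanishing derivative plus surjectivity) force $\widetilde f$ to be an automorphism, and in general they do not: a locally univalent surjection need not be a covering map (wrap a sufficiently long conformal rectangle more than one full turn around an annulus via the exponential), and for such a map the asserted equality fails. So the missing step is not a formality; it is exactly where the content of the statement lives. To close the gap you would need either to add the hypothesis that $f$ is a covering and show by the standard lifting argument that $\widetilde f\in\mathrm{Aut}(U)$, or to argue differently.

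The paper's own proof takes a different, purely metric route: it defines the candidate density $\mu(f(z))=\frac{1}{|f'(z)|}\cdot\frac{1}{1-|z|^{2}}$ on $D$, pulls hyperbolic geodesics $\gamma$ of $D$ back to geodesics $c$ of $U$, verifies $\int_{\gamma}\mu(w)\,|dw|=\int_{c}\frac{|dz|}{1-|z|^{2}}=\int_{\gamma}\lambda(w)\,|dw|$ for every geodesic, and concludes $\mu=\lambda$. Your Schwarz--Pick lifting is therefore a genuinely different approach, but as written it proves a weaker (one-sided) statement. Finally, the second half of your proposal --- monotonicity under inclusion, the twice-punctured plane, the asymptotics of $\lambda$ near an isolated boundary point, and the constant $e^{-\pi}$ --- belongs to the paper's main Bohr-radius theorem, not to this local-isometry identity; none of it is needed here, which suggests the target statement was partly misidentified.
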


and a conformal map $f:U\rightarrow D$ is onto. Then%
\begin{equation*}
\lambda (f(z))=\frac{1}{|f^{\prime }(z)|}\frac{1}{1-|z|^{2}}.
\end{equation*}

Proof: Divide $D$ into geodiscs. These geodiscs correpond to geodisks

in $U.$ Let $\mu $ be defined as%
\begin{equation*}
\mu (f(z)=\frac{1}{|f^{\prime }(z)|}\frac{1}{1-|z|^{2}}
\end{equation*}%
Then $\int_{\gamma }\mu (w)|dw|=\int_{c}\mu (f(z))f^{\prime }(z)|dz|=\int_{c}%
\frac{1}{1-|z|^{2}}|dz|,$ For all $c$ geodiscs in $U,$

Hence 
\begin{equation*}
\int_{\gamma }\mu (w)|dw|=\int_{\gamma }\lambda (w)|dw|,
\end{equation*}

for all geodisks $\gamma $ in $D.$ Hence they are the same.

\qquad Let us also recall the modular function: [11], [10], 
\begin{equation*}
J(z)=16z\dprod\limits_{1}^{\infty }\left[ \frac{1+z^{2n}}{1+z^{2n-1}}\right]
^{8},
\end{equation*}

$J(z)$ is $0$ only at $0$ and $J\neq 0,1,\infty $ 0n $|z|>0.$ Note that 
\begin{eqnarray}
-J(-z) &=&16z\dsum\limits_{0}^{\infty }A_{n}z^{n}, \\
A_{n} &>&0.  \notag
\end{eqnarray}

\bigskip This function is like a Koeba function for large function. We
immediately conclude that

\begin{equation}
\underset{|z|=r}{\max }|J(z)|=|J(-r)|.
\end{equation}

It is known that the coefficients $\{A_{n}\}$ are convex and increasing,
[10] and [11]. Hence,

by a theorem of Littlewood [8], we have

Lemma 1: if $\dsum\limits_{0}^{\infty }a_{k}$ is subordinate to $-J(-z)$
then $|a_{k}|\leq 16A_{k},$ for all $k.$

Also the following will be used:

Lemma II: $J(z)$ has radius of univalence $e^{-\pi /2},$ [10, p85].

In addition, as shown in the proof of Lemma II,

\begin{equation}
|J(-e^{-\pi })|=1,\text{ }|J(e^{-\pi })|=1/2.
\end{equation}

Consequently, by (1), (2), (3), we conclude that%
\begin{equation}
\underset{|z|\leq e^{-\pi }}{\max }|J(z)|=1.
\end{equation}

The function

\begin{equation*}
Q(z)=J(\exp (-\alpha \frac{1+z}{1-z})),\alpha <1
\end{equation*}

is a covering map into the domain $C\backslash \{0,1\}$ with $%
Q(0)=J(e^{-\alpha }).$

Other properties can be found in [10].

\section{This is our main theorem:}

\begin{theorem}
\bigskip Let $F(z)=\dsum\limits_{0}^{\infty }a_{n}z^{n}$ be analytic on $U$
and suppose that $F(U)$ misses at least

two points then 
\begin{equation*}
\dsum\limits_{1}^{\infty }\left\vert a_{n}z^{n}\right\vert \leq
dis(F(0),\partial F(U)),
\end{equation*}

for $|z|\leq e^{-\pi }=\allowbreak 4.\,\allowbreak 321\,4\times 10^{-2}.$
\end{theorem}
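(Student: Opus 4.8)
The plan is to reduce the statement, via the Uniformization Theorem and its Corollary, to a sharp coefficient estimate governed by the modular function $J$, and then to sum those coefficients using the value $|J(-e^{-\pi})|=1$ recorded in (4). Write $a_0=F(0)$ and $d=\mathrm{dis}(a_0,\partial F(U))$, and let $w_0\in\partial F(U)$ be a nearest boundary point, so that $|a_0-w_0|=d$ and the open disc $B(a_0,d)\subseteq F(U)$. Since $F(U)$ is hyperbolic it omits a second finite point $w_1$, and $|a_0-w_1|\ge d$ because $B(a_0,d)\subseteq F(U)$. Thus $F$ maps $U$ into the twice–punctured plane $\mathbb{C}\setminus\{w_0,w_1\}$. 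Applying the affine map $T(w)=(w-w_0)/(w_1-w_0)$, which sends $w_0\mapsto 0$ and $w_1\mapsto 1$, produces $G=T\circ F:U\to\mathbb{C}\setminus\{0,1\}$ with $G(0)=c:=(a_0-w_0)/(w_1-w_0)$, where $|c|=d/|w_1-w_0|$ and, since $|c-1|=|a_0-w_1|/|w_1-w_0|\ge|c|$, the puncture $0$ is the nearest puncture to $c$.

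The reduction is now bookkeeping. The non-constant Taylor coefficients of $G$ are $b_n=a_n/(w_1-w_0)$, so $\sum_{n\ge1}|a_n|r^n=|w_1-w_0|\sum_{n\ge1}|b_n|r^n$ while $d=|c|\,|w_1-w_0|$. Hence the target inequality $\sum_{n\ge1}|a_n|r^n\le d$ is equivalent to the normalized claim $\sum_{n\ge1}|b_n|r^n\le|c|$ for $r\le e^{-\pi}$.

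The core step is the sharp coefficient bound $|b_n|\le 16\,|c|\,A_{n-1}$, where the $A_n$ are the positive, convex, increasing coefficients of $-J(-z)=16z\sum_{n\ge0}A_nz^n$ from (2). Here I would invoke the Corollary to the Uniformization Theorem: the universal covering $Q_c$ of $\mathbb{C}\setminus\{0,1\}$ with $Q_c(0)=c$ exists, and $G=Q_c\circ\varphi$ for a Schwarz function $\varphi$, so that $G\prec Q_c$. Comparing $Q_c$ with the modular function $J$ — which serves as the Koebe function for large functions and controls every cover of the twice–punctured plane — together with the convexity and monotonicity of $\{A_n\}$ from [10],[11] and Littlewood's subordination theorem (Lemma 1), yields $|b_n|\le 16\,|c|\,A_{n-1}$. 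Summing is then immediate: since $-J(-z)$ has nonnegative coefficients, $\sum_{n\ge1}16A_{n-1}r^n=-J(-r)=|J(-r)|$ is increasing in $r$, so by (3) and (4) it is at most $|J(-e^{-\pi})|=1$ for $r\le e^{-\pi}$ (equivalently $\max_{|z|\le e^{-\pi}}|J(z)|=1$ by (5)). Therefore $\sum_{n\ge1}|b_n|r^n\le 16\,|c|\sum_{n\ge1}A_{n-1}r^n\le|c|$, which is the normalized claim, and undoing the affine normalization gives $\sum_{n\ge1}|a_n|r^n\le d$.

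The main obstacle is the middle step: dominating the covering map $Q_c$ of $\mathbb{C}\setminus\{0,1\}$ with an arbitrary (complex) base point $c$ by the single function $J$, with the exact scale $|c|$. The difficulty is genuine because the extremal object $J$ has its center at a puncture — one checks that $J(U)=\mathbb{C}\setminus\{1\}$, so equality in the theorem is only approached as the second puncture recedes — and hence the comparison cannot be a naive subordination $G\prec J$; it must route through $Q_c$ and Littlewood's theorem. It is precisely here that the hyperbolic-metric identity (1) and Minda's theorem are the natural tools for pinning down the sharp constant and the emergence of the radius $e^{-\pi}$.
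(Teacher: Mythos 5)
Your overall architecture matches the paper's: send the two omitted values to $0$ and $1$ by an affine map, derive coefficient bounds of the shape $16\cdot(\text{scale})\cdot A_{n-1}$ via Littlewood's Lemma 1, and sum using $-J(-e^{-\pi})=1$ from (4)--(5). The bookkeeping and the final summation are fine. But there is a genuine gap exactly where you yourself flag ``the main obstacle'': the central estimate $|b_n|\le 16\,|c|\,A_{n-1}$ is asserted, not proved. Writing $G=Q_c\circ\varphi$ gives $G\prec Q_c$, but Lemma 1 applies only when the majorant is $-J(-z)$ itself (it needs the convexity and monotonicity of that specific coefficient sequence); it gives nothing for subordination to $Q_c$, whose Taylor coefficients you have not controlled. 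Nor can one interpose a subordination $Q_c\prec |c|\cdot(\text{rotation of }J)$, since $Q_c(0)=c\neq 0$ while $J(0)=0$, so no Schwarz function can connect them. The sentence ``comparing $Q_c$ with the modular function $J$ \dots yields $|b_n|\le 16\,|c|\,A_{n-1}$'' is therefore precisely the content of the theorem, and naming (1) and Minda's theorem as ``natural tools'' does not supply the argument.

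The paper's device for closing this gap is the multiplication by $z$. With $g=(F-a)/(b-a)$ omitting $0$ and $1$, it sets $h(z)=zg(z)$, which vanishes only at the origin; by the Hurwitz--Nehari theory [10] such a function can be written as $h=\delta\,J(\omega)$ with $\omega$ a Schwarz function, where $\delta=\mathrm{dis}(0,\partial h(U))$. Now Lemma 1 applies \emph{directly} to $h$, whose $n$-th Taylor coefficient is $a_{n-1}/(b-a)$, giving $|a_{n-1}|/|b-a|\le 16\,\delta\,A_{n-1}$; the scale is then pinned down by the estimate $\delta\le\min|g|\le|(a_0-a)/(b-a)|=d/|b-a|$, reinforced by the hyperbolic inequality $\lambda(w)\,d(w,\partial D)\le 1$ together with (1). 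This is the idea your proposal is missing: rather than comparing the base-point-$c$ cover $Q_c$ to $J$, one manufactures from $g$ a function that, like $J$, vanishes only at the origin, and subordinates \emph{that} to a multiple of $J$. (A minor side effect is an index shift --- the paper bounds the coefficients of $zg$ rather than of $g$ --- which your cleaner normalization avoids, but only at the cost of the unproven step.)
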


\begin{proof}
\bigskip\ By the Uniformization Theorem, There is a universal cover
(conformal) $G(z)$ onto $F(U),$ with

$G(0)=F(0)=a_{0}.$

Then $F(z)=G(\varphi (z)),$ where $|\varphi (z)|<1$ and $\varphi (0)=0.$
Suppose also that $F(U)$

misses points $a,b$, with 
\begin{equation*}
|a-a_{0}|=d(a_{0},\partial F(U)).
\end{equation*}%
Then the function 
\begin{equation*}
g(z)=\frac{F(z)-a}{b-a},
\end{equation*}%
misses $0,1$ and $g(z)=Q(\psi (z)),$ $\psi (0)=0.$ Then 
\begin{equation*}
g(0)=Q(\psi (0))=\frac{-a_{0}-a}{b-a}=J(e^{-\alpha }).
\end{equation*}

Next, let 
\begin{equation*}
h(z)=zg(z),
\end{equation*}

$h(z)$ is only $0$ at $0.$

If 
\begin{equation*}
F(z)=\dsum\limits_{0}a_{n}z^{n}.
\end{equation*}

Then 
\begin{equation*}
h(z)=\frac{a_{0}-a}{b-a}z+\frac{z}{b-a}\dsum\limits_{1}a_{n}z^{n}.
\end{equation*}

If 
\begin{equation*}
\delta =dis(0,\partial h(U)),
\end{equation*}%
then $\delta =\min |zg(z)|\leq \min |g(z)|=\min |\frac{F(z)-a}{b-a}|\leq |%
\frac{-a_{0}-a}{b-a}|=\left\vert \frac{dis(F(0),\partial F(U))}{b-a}%
\right\vert ,$

hence 
\begin{equation*}
\delta \leq \left\vert \frac{dis(F(0),\partial F(U))}{b-a}\right\vert .
\end{equation*}

On the other hand, since $h(z)$ is $0$ only at $0,$ $h/\delta =J(\omega ),$ $%
\omega (0)=0,$ [10]

and then $h(z)=\delta J(\omega ).$

Using the hyperbolic identity [4],%
\begin{equation*}
\lambda (w)d(w,\partial D)\leq 1,
\end{equation*}

By (2), we conclude that $\lambda (a_{0})=\frac{1}{|G^{\prime }(0)|}$ and by
(3), 
\begin{equation*}
dis(F(0),\partial F(U))\geq \delta |b-a|.
\end{equation*}

Then%
\begin{eqnarray*}
\lambda (a_{0})d(a_{0},\partial D) &\leq &1, \\
\frac{1}{|G^{\prime }(0)|}d(a_{0},\partial F(U)) &\leq &1 \\
\delta |b-a|/|G^{\prime }(0)| &\leq &1
\end{eqnarray*}
\end{proof}

\bigskip

\qquad \qquad and then

\begin{equation*}
\delta <\left\vert \frac{G^{\prime }(0)}{b-a}\right\vert .
\end{equation*}

\qquad \qquad \qquad As the coeff of $J(z)$ are convex increasing, by
Littlewood [8],

\begin{equation*}
\left\vert \frac{a_{n-1}}{b-a}\right\vert <\delta 16A_{n}<\left\vert \frac{%
dis(F(0),\partial F(U))}{b-a}\right\vert 16A_{n}
\end{equation*}

\begin{equation*}
\left\vert a_{n-1}\right\vert <|a|A_{n}=dis(F(0),\partial F(U))A_{n}
\end{equation*}

\begin{eqnarray*}
\dsum\limits_{1}\left\vert a_{n-1}\right\vert r^{n} &\leq &dis(F(0),\partial
F(U))\cdot 16\dsum\limits_{1}\left\vert A_{n}\right\vert r^{n} \\
&=&dis(F(0),\partial F(U))\cdot J(-r).
\end{eqnarray*}

\qquad \qquad (4) and (5) imply that $-J(-r)<1$ when $r\leq e^{-\pi }$This
proves Theorem 1.

\begin{corollary}
If $F(z)=\dsum\limits_{0}^{\infty }a_{n}z^{n}$ is analytic on $U$ and
suppose that $F(U)$ misses at least

two points, with $dis(F(0),\partial F(U))<1$ then $F(z)$ satisfies the von
Neumann

inequality for $|z|\leq e^{-\pi }=\allowbreak 4.\,\allowbreak 321\,4\times
10^{-2}$, in other words, for any polynomial $p(z)$ 
\begin{equation*}
p(F(z))\leq ||p||_{\infty }.
\end{equation*}
\end{corollary}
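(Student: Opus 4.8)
The plan is to reduce the von Neumann inequality to the Bohr-type estimate already established in the main theorem, and then to invoke the fact that the Bohr operator $M$ turns the relevant class into a Banach algebra on which von Neumann's inequality is available in the radius regime $|z|\le e^{-\pi}<1/3$.

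First I would set $d=dis(F(0),\partial F(U))$ and $f(z)=F(z)-F(0)=\sum_{n\ge1}a_nz^n$, so that $f(0)=0$ and $f$ still misses at least two finite values, hence remains a large function. Applying the main theorem to $F$ gives, for $|z|\le e^{-\pi}$,
\[
M(f)=\sum_{n\ge1}|a_nz^n|\le d<1 .
\]
In particular $|f(z)|\le M(f)<1$, so $f$ maps the disk $|z|\le e^{-\pi}$ into the open unit disk $U$; thus $f$ is a contraction both pointwise and in the Bohr norm on this disk.

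Next I would feed this contraction into the Banach-algebra machinery recorded in the Introduction. Because $M$ satisfies $M(gh)\le M(g)M(h)$, $M(g+h)\le M(g)+M(h)$ and $M(1)=1$, the class is a commutative Banach algebra under $M$, and for $|z|\le e^{-\pi}$ the element $f$ has norm $M(f)<1$. Writing a polynomial as $p(w)=\sum_{k=0}^{m}c_kw^k$ and using $p(F(z))=\sum_k c_k f(z)^k$ after the harmless recentering $w\mapsto w+F(0)$ (which replaces $\|p\|_\infty$ by the supremum of $|p|$ over the image disk), the target inequality $M(p(f))\le\|p\|_\infty$ is exactly the von Neumann inequality for the contraction $f$. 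Since $e^{-\pi}<1/3$ lies strictly inside Dixon's admissible range, the von Neumann inequality (property 2 of the Introduction, valid for $\beta<1/3$) applies to $f$ and yields $|p(F(z))|\le\|p\|_\infty$ for $|z|\le e^{-\pi}$.

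The main obstacle is the last step rather than the reduction: the naive Banach-algebra estimate gives only $M(p(f))\le\sum_k|c_k|M(f)^k\le\sum_k|c_k|$, and $\sum_k|c_k|$ is in general strictly larger than $\|p\|_\infty$, so submultiplicativity by itself is not enough. Closing the gap down to $\|p\|_\infty$ is precisely the content of the von Neumann inequality, and the delicate point is to verify that our situation genuinely falls under Dixon's theorem, namely that the contraction $f$ produced by the large-function hypothesis sits inside the $l^1_\beta$-type algebra with $\beta$ corresponding to a radius below $1/3$. Once that identification is made, together with the bookkeeping of the recentering by $F(0)$, the conclusion follows.
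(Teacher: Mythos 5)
The paper states this corollary with no proof at all, so there is nothing of the authors' to compare your argument against; it has to stand on its own, and as written it does not close. The sound part is the reduction: applying the main theorem to get $M(F-F(0))=\sum_{n\geq 1}|a_{n}z^{n}|\leq d<1$ for $|z|\leq e^{-\pi }$, where $d=dis(F(0),\partial F(U))$, is the right use of the Bohr estimate and does exhibit $f=F-F(0)$ as a contraction for the norm $M$. The gap is the final step, which you yourself flag: you conclude by ``applying the von Neumann inequality to the contraction $f$,'' but the corollary \emph{is} the von Neumann inequality for this class, so this is circular unless you verify that $f$ actually lands in a setting where the inequality is already known. Submultiplicativity of $M$ is not enough --- von Neumann's inequality fails in general commutative Banach algebras, which is the entire point of Dixon's paper --- and the proposed identification with Dixon's $l_{\beta }^{1}$ does not go through: a norm-one element there satisfies $\sum_{j}|x_{j}|\leq \beta <1/3$, whereas the main theorem only gives $\sum_{n\geq 1}|a_{n}|e^{-\pi n}\leq d$ with $d<1$, which need not be below $1/3$. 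In the regime $d\leq 1/3$ there is an elementary way to finish that you did not use: Bohr's inequality applied to $p$ itself gives $\sum_{k}|c_{k}|(1/3)^{k}\leq \Vert p\Vert _{\infty }$, hence $M(p(f))\leq \sum_{k}|c_{k}|M(f)^{k}\leq \sum_{k}|c_{k}|d^{k}\leq \Vert p\Vert _{\infty }$; but this breaks down precisely when $1/3<d<1$, which the hypothesis allows.

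A second, independent problem is the ``harmless recentering.'' Writing $p(F)=\sum_{k}c_{k}(f+a_{0})^{k}$ reintroduces $a_{0}=F(0)$, which is not assumed to lie in $\overline{U}$: the constant term alone contributes $|p(a_{0})|$ to $M(p(F))$, and this can exceed $\Vert p\Vert _{\infty ,\overline{U}}$ by an arbitrary amount. For instance, let $F$ be a universal covering of $\mathbb{C}\backslash \{100,101\}$ with $F(0)=100.5$, so that $d=1/2<1$, and take $p(w)=w$; then $M(p(F))\geq 100.5$ while $\Vert p\Vert _{\infty }=1$. So the statement is only tenable with some normalization (e.g. $p(0)=0$ together with $F(0)=0$, i.e. the non-unital version, or $\Vert p\Vert _{\infty }$ taken over a disk containing $F(0)$), and your sketch leaves exactly this point --- together with the choice of $\beta $ --- unresolved. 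To make the corollary into a theorem you would need to state the normalization precisely and then supply an actual proof of the von Neumann step rather than an appeal to it.
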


\section{ \protect\underline{Harmonic maps}: If $f=\protect\dsum%
\limits_{0}a_{n}z^{n}$ is analytic, define the operator 
\protect\begin{equation*}
M(f)=\protect\dsum\limits_{0}^{\infty }\left\vert a_{n}z^{n}\right\vert .
\protect\end{equation*}%
}

Clearly: 1) $M(f+g)\leq M(f)+M(g)$

\ \ \ \ \ \ \ \ \ \ \ \ 2) $M(fg)\leq M(f)M(g).$

\bigskip

\begin{theorem}
Let $f(z)=h(z)+\overline{g(z)}$ be harmonic with $h$ missing at least $2$
points,

\begin{theorem}
$h(0)=a_{0},$ $g(0)=0$ and $g^{\prime }(z)=\mu (z)h^{\prime }(z)$ then

\begin{equation*}
M(f)\leq (1+|\mu (z)|)d(a_{0},\partial (h(U))
\end{equation*}

for $|z|\leq e^{-\pi }=\allowbreak 4.\,\allowbreak 321\,4\times 10^{-2}.$
\end{theorem}
\end{theorem}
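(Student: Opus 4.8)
The plan is to split the harmonic Bohr sum into its analytic and co-analytic parts and handle each with the machinery already assembled. Writing $h=\sum_0 a_n z^n$ and $g=\sum_1 b_n z^n$ (so $g(0)=0$ forces the constant term to vanish), conjugation leaves the moduli of the coefficients unchanged, whence $M(\overline{g})=M(g)$, and the subadditivity of $M$ (property 1) gives
$$M(f)\le M(h)+M(g),$$
where throughout I read $M(h)$ and $M(g)$ as the non-constant sums $\sum_1|a_nz^n|$ and $\sum_1|b_nz^n|$, consistent with the convention of the main theorem. The first summand is then immediate: since $h$ is analytic on $U$ and $h(U)$ misses at least two points, the main theorem applies verbatim to $h$ and yields
$$M(h)=\sum_1|a_nz^n|\le d(a_0,\partial h(U))\qquad (|z|\le e^{-\pi}).$$

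For the co-analytic part I would pass to derivatives. On $|z|=r$ one has the term-by-term identity $M(g)(r)=\int_0^r M(g')(t)\,dt$, since $\int_0^r n|b_n|t^{n-1}\,dt=|b_n|r^n$, and likewise $\int_0^r M(h')(t)\,dt=\sum_1|a_n|r^n=M(h)(r)$. The dilatation relation $g'=\mu h'$ together with the submultiplicativity of $M$ (property 2) gives $M(g')\le M(\mu)\,M(h')$. Because $e^{-\pi}<1/3$, Bohr's theorem bounds $M(\mu)(t)\le|\mu|$ throughout the disk in question, where $|\mu|$ denotes the sup-norm of the dilatation (the bound being exact when $\mu$ is constant). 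Substituting into the integral representation,
$$M(g)(r)=\int_0^r M(g')(t)\,dt\le|\mu|\int_0^r M(h')(t)\,dt=|\mu|\,M(h)(r)\le|\mu|\,d(a_0,\partial h(U)).$$
Adding the two estimates then produces $M(f)\le(1+|\mu|)\,d(a_0,\partial h(U))$ for $|z|\le e^{-\pi}$, as claimed.

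The step I expect to be the main obstacle is this co-analytic estimate, and in particular the clean appearance of the factor $|\mu(z)|$. Two points need care. First, one must know that $\mu$ is genuinely analytic so that property 2 applies to the product $g'=\mu h'$; this is exactly the sense-preserving hypothesis ($\mu=g'/h'$ analytic with $|\mu|<1$ by Lewy's theorem). Second, the precise reading of $|\mu(z)|$ in the statement must be pinned down: the integral argument delivers the stated bound immediately when $\mu$ is constant, while for non-constant analytic $\mu$ it gives the inequality with $|\mu|$ replaced by $\sup_U|\mu|$ via Bohr's theorem, and I would confirm that this is the intended meaning. It also remains to verify the harmless bookkeeping that the constant term $a_0$ of $h$ is excluded from $M(f)$, since otherwise the right-hand side $d(a_0,\partial h(U))$ need not dominate it.
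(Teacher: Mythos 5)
Your proposal follows essentially the same route as the paper: split $M(f)\le M(h)+M(g)$, apply the main theorem to $h$ for the analytic part, and control the co-analytic part via $M(g)=\int_0^r M(g')\,dt$ together with $M(g')\le M(\mu)M(h')$ and Bohr's theorem applied to $\mu$. Your caveat that this argument really produces $\sup_U|\mu|$ rather than the pointwise value $|\mu(z)|$ is well taken --- the paper's own proof has the same imprecision --- but the structure of the two arguments is identical.
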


\begin{proof}
$M(g)=\dint\limits_{0}^{r}M(g^{\prime })dr\leq
\dint\limits_{0}^{r}M(a)M(h^{\prime })dr$

\begin{proof}
Let $|z|<1/3.$ Then 
\begin{equation*}
M(g)\leq \dint\limits_{0}^{r}M(h^{\prime })dr=M(h)-|a_{0}|=M(h-a_{0}).
\end{equation*}

Let $|z|<e^{-\pi },$ then%
\begin{eqnarray*}
M(g) &<&d(a_{0},\partial (h(U)), \\
M(f) &=&M(h)+M(g) \\
&\leq &(1+|\mu (z)|)d(a_{0},\partial (h(U))
\end{eqnarray*}
\end{proof}
\end{proof}

\end{document}